 \newtheorem{thm}{Theorem}[section]
 \newtheorem{cor}[thm]{Corollary}
 \theoremstyle{definition}
 \newtheorem{defn}[thm]{Definition}
 \theoremstyle{remark}
 \newtheorem{rem}[thm]{Remark}
  \theoremstyle{Example}
 \numberwithin{equation}{section}
\begin{document}
\address{Faculty of Mathematical sciences, The University of Guilan}
\email{ahmadi@guilan.ac.ir, maliheh@phd.guilan.ac.ir}

\subjclass[2010]{Primary 37B10; Secondary 37B40, 37B20,37B05.}

\keywords{entropy, proximal, $\Delta^*$ set, $IP$-set, density}

\title[]
{Characterization of  Entropy for Spacing shifts}
\author[D. Ahmadi, M. Dabbaghian]{Dawoud Ahmadi Dastjerdi and Maliheh Dabbaghian Amiri}
 \maketitle

 \begin{abstract}
Suppose $P\subseteq \mathbb{N}$ and let $(\Sigma_P,\,\sigma_P)$ be the space of a spacing shift. We show that if entropy $h_{\sigma_P}=0$ then  $(\Sigma_P,\,\sigma_P)$ is proximal. Also   $h_{\sigma_P}=0$ if and only if $P=\mathbb N\setminus E$ where $E$ is an intersective set. Moreover, we show that
 $h_{\sigma_P}>0$ implies that $P$ is a $\Delta^*$ set; and by giving a class of examples, we show that this is not a sufficient condition. Then there is enough results to solve  question 5 given in [J. Banks et al.,
\textit{Dynamics of Spacing Shifts}, Discrete Contin. Dyn. Syst.,
to appear.].
 \end{abstract}
\section*{Introduction and Definitions}
In this paper we  give a characterization of entropy of a spacing shifts by the combinatorial property of the set $P\subseteq \mathbb{N}$ which defines a spacing shift.
A detailed study for spacing shifts can be found in \cite{sp}, so we here only consider the basic definitions and notions needed for our task.

A topological dynamical system (TDS) is a pair $(X,\, T)$ such that
$X$ is a compact metric space and $T$ is  a continuous surjective
self map.
The \emph{orbit closure} of a point $x$ in $(X,\,T)$ is the set
$\overline{\mathcal{O}}(x)=\overline{\{T^n(x): n\in\mathbb N\}}$.
A system $(X,\,T)$
is \emph{transitive} if it has a point $x$ such that
$\overline{\mathcal{O}}(x)=X$.
 Also a point
$x$ is \emph{recurrent} if for every neighborhood $U$ of $x$
there exists $n\neq 0$ such that $T^n(x)\in U$. We let $N(x,\,U)=\{n\in\mathbb N:
T^n(x)\in U \}$ and $N(U,V)=\{n\in\mathbb N: T^n(U)\cap V\neq
\emptyset\}$ where $U$ and $V$ are open sets.

Let $x_1,\,x_2\in
X$. One says that $(x_1,\, x_2)\in X\times X$ is a \emph{proximal
pair} if $$\liminf_{n\rightarrow \infty}d(T^n(x),\,T^n(y))=0;$$ and a
TDS is called \emph{proximal} if all $(x_1,\,
x_2)\in X\times X$ are proximal pairs.

Let $A=\{a_n\}_{n \in \Bbb N}$ be an increasing sequence of natural numbers.
Then $s=a_{i_1}+a_{i_2}+...+a_{i_n}, \ \ i_j<i_{j+1}$ is called a
\textit{partial finite sum} of $A$. The \textit{finite sums} of
$A$ denoted by $ FS(A)$ is the set of all partial finite sums. A
set $F \subset \Bbb N$ is called \textit{$IP$-set} if it contains
the finite sums of some sequence of natural numbers. Let $\mathcal{IP}$ be the set of all $IP$-sets.

A set $D \subset \Bbb N$ is called \textit{$\Delta$-set} if
there exists an increasing sequence of natural numbers $S=(s_n)_{n\in\Bbb N}$
such that the difference set $\Delta(S)=\{s_i-s_j: \ i>j\}
\subset D$. Denote by $\bf\Delta$ the set of all $\Delta$-sets.
Any $IP$-set is a $\Delta$-set; for let
$S=\{a_1,a_1+a_2,a_1+a_2+a_3,...\}$.

A collection $\mathcal{F}$ of non-empty subsets  of $\mathbb{N}$
is called a \emph{family} if it is hereditary upward: if
$F\in\mathcal{F}$ and $F\subset F'$, then $F'\in\mathcal{F}$. The
dual family ${\mathcal F}^*$, is defined to be all subsets of $
\Bbb N$
 that meets all sets in ${\mathcal F}$. That is
$${\mathcal F}^*= \{G\subset \mathbb N : \ \ G\cap F\neq
\emptyset, \ \forall F \in {\mathcal F} \}.$$ Hence
$\mathcal{IP}^*$ and ${\bf\Delta}^*$ are the dual family of
$\mathcal{IP}$ and ${\bf\Delta}$ respectively.

The notions for a subset of natural numbers such as $\Delta$ or $IP$ are  structural notions. For instance, an $IP$-set is more structured than a $\Delta$-set. Other structures are also
 defined \cite{hind}, \cite{Idem}.  There are also notions for largeness which are defined by means of different densities on  subsets of natural numbers. See \cite{hind}, \cite{mul} for a rather complete treatment for both of these notions. Let $A\subseteq\mathbb{N}$. Then
$$\overline{d}(A)=\limsup_{n\rightarrow\infty}{\frac{|A\cap \{1,\cdots,\,n\}|}{n}}$$ is called the \emph{upper density} of $A$. Also the \emph{lower density} is defined as
$$\underline{d}(A)=\liminf_{n\rightarrow\infty}{\frac{|A\cap \{1,\cdots,\,n\}|}{n}}.$$
When $\overline{d}(A)=\underline{d}(A)$ then it is called the \emph{density} of $A$ and is denoted by $d(A)$. The \emph{upper Banach density} of $A$ is denoted by $d^*(A)$ and is defined as
$$d^*(A)=\limsup_{N_i-M_i\rightarrow\infty} \frac{{|A\cap \{M_i,\,M_i+1,\,\cdots,\,N_i\}|}}{N_i-M_i+1}.$$

When there is $k\in\mathbb{N}$ such that all the intervals in
$\mathbb{N}\setminus A$ have length less than $k$, then $A$ is
called \emph{syndetic}. The length of the largest of such
intervals will be called the \emph{gap} of $A$. Clearly,
$\underline{d}(A)>0$ for any syndetic set $A$. The dual of
syndetic sets are \emph{thick} sets; a set is thick if and only
if $d^*(A)=1$. We say $A$ is \emph{thickly syndetic} if for every
$N$ the positions where consecutive elements of length $N$ begins
form a syndetic set.

Note that $\Delta^*$-sets are highly structured and are syndetic \cite{Idem}. Another of such large and structured subsets of $\mathbb{N}$ are \emph{Bohr sets}. We say that a subset $A\subset \mathbb N$ is a Bohr set if
there exist $m\in \mathbb N$, $\alpha \in  \mathbb{T}=\{z\in\mathbb{C}:\ |z|=1\}$ and open
set $U\subset {\mathbb T}^m$ such that $$\{n\in\mathbb N:\
n\alpha\in U\}$$ is in $A$. In particular, every $k\mathbb N$ is a Bohr set.

\begin{defn}\label{spacing}
 For any set $P \subset \Bbb N$
 define a \emph{spacing shift} to be the  subshift
$$\Sigma_P =\{s \in \Sigma : \  s_i=s_j=1  \ \Rightarrow |i-j| \in P \cup\{0\}\}.$$
\end{defn}
For any $y\in\Sigma_P$ we associate a set $A_y=\{i: y_i=1\}$.  it is clear that $A_y-A_y\subset P$.
Therefore, notions of largeness and structure  for $A_y$ gives the same notions for incidence of $1$'s for $y$. That is we set
$$d(y):=d(A_y)=\lim_{n\rightarrow \infty}  {\frac{\sum_{1}^{n}y_i}{n}}= \lim_{n\rightarrow \infty}  {\frac{|A_y\cap \{1,\cdots,n\}|}{n}}.$$
Similarly, $\overline{d}(y)$, $\underline{d}(y)$ and $d^*(y)$ can be defined.

By Definition \ref{spacing},  it is clear that $A_y-A_y\subset P$.

\subsection*{Acknowledgements} We would like to thank Maryam Hosseini for her fruitful discussions.
\section{Zero Entropy Gives Proximality}\label{sect}
The following questions arises in \cite[Question 5]{sp}.

``Is there $P$ such that $\mathbb N\setminus P$ does not contain
$IP$-set but $\Sigma_P$ is proximal? What about positive
topological entropy? Are these two properties (i.e proximality
and zero entropy) essentially different in the context of spacing
shifts? "

We give positive answer to the first question but we will show
that if $\mathbb N\setminus P$  contains  $\Delta$-set (and hence $IP$-set), then the
entropy is zero. Also we will show that zero entropy  in spacing
shifts implies proximality.

For
any $x, y \in \Sigma_P$ let
$$F_{xy}(t) = \liminf_{n\rightarrow\infty} {{{1}\over {n}} |\{ 0 \leq m \leq n-1: d(\sigma^m(x), \sigma^m(y)) < t\}|}.$$

\begin{rem}
In \cite{sp} the authors show
 that if there are $x,\, y \in \Sigma_P$, $t>0$
such that $F_{xy}(t)<1$ then $h_{\sigma_P}>0$ . If such
$x, y$ and $t$ exist, then there is some $y'\in \Sigma_P$ such
that $\overline{d}(y')>0$. Because let $t={2^{-l}}$ then there
exists an increasing sequence $\{q_i\}_{i=1}^{\infty}$ and
$\epsilon>0$ such that either $|\{0\leq j\leq q_i: x_{j}\neq
0\}|> {{q_i\epsilon}\over{l+1}}$ or $|\{0\leq j\leq t_i:
y_{j}\neq 0\}|> {{q_i\epsilon}\over{l+1}}$. Hence
$\overline{d}(x)$ or $\overline{d}(y)$ is positive.
\end{rem}

In \cite[Lemma 3.5]{sp}, it has been proved that if $\mathbb N
\setminus P$ contains an $IP$-set then $d(y)=0$, for $y\in
\Sigma_P$. We give a stronger result with a simpler proof.

\begin{thm}
If $\mathbb N\setminus P$ contains a $\Delta$-set then $d^*(y)=0$
for all $y\in\Sigma_P$.
\end{thm}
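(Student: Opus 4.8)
The plan is to argue by contradiction using the combinatorial structure of a $\Delta$-set inside $\mathbb N\setminus P$. Suppose $\mathbb N\setminus P$ contains a $\Delta$-set $D=\Delta(S)$ for some increasing sequence $S=(s_n)_{n\in\mathbb N}$, and suppose toward a contradiction that there is $y\in\Sigma_P$ with $d^*(y)=\delta>0$. Recall from the discussion after Definition \ref{spacing} that $A_y-A_y\subseteq P$, so $A_y-A_y$ is disjoint from $D=\Delta(S)$. The key point is that $A_y$ has positive upper Banach density, so it must exhibit many coincidences with translates of itself, whereas disjointness from a $\Delta$-set forbids exactly the kind of repeated return structure that positive Banach density forces.

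The main step is to extract, from the $\Delta$-set $D$, a long finite arithmetic-progression-like or Sidon-type obstruction and then pigeonhole. Concretely, fix $N$ large (to be chosen as roughly $1/\delta$ plus slack) and consider the first $N$ terms $s_1<s_2<\cdots<s_N$ of $S$. By the definition of upper Banach density there are arbitrarily long intervals $I=\{M,M+1,\dots,M+L-1\}$ with $|A_y\cap I|\ge (\delta/2)L$; choose $L$ much larger than $s_N$. Now translate $A_y\cap I$ by each of $s_1,\dots,s_N$: inside a slightly enlarged interval $I'$ of length $L+s_N$ we have $N$ shifted copies of a set of size $\ge(\delta/2)L$, so the average multiplicity is $\ge N(\delta/2)L/(L+s_N)\approx N\delta/2$. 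Taking $N>2/\delta$ forces some point $p\in I'$ to lie in at least two of the translated copies, i.e.\ $p-s_i\in A_y$ and $p-s_j\in A_y$ for some $i>j$; then $(p-s_j)-(p-s_i)=s_i-s_j\in A_y-A_y\subseteq P$, contradicting $s_i-s_j\in\Delta(S)\subseteq\mathbb N\setminus P$.

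The hard part will be making the pigeonhole quantitatively clean: one must be careful that the enlarged interval $I'$ is not so much larger than $I$ that the density gain is washed out, which is why $L$ must be chosen after $N$ and $s_N$, with $L/s_N\to\infty$; since $d^*(y)>0$ gives intervals of every length with the density bound, this is harmless. A secondary technical point is that the two translates containing $p$ must come from \emph{distinct} $s_i,s_j$ — but if $p$ has multiplicity $\ge 2$ among $N$ translates indexed by the distinct values $s_1,\dots,s_N$, the two witnessing indices are automatically distinct, so $s_i-s_j$ is a genuine nonzero element of $\Delta(S)$. I would also remark that this simultaneously reproves \cite[Lemma 3.5]{sp}, since $d(y)\le d^*(y)=0$, and since every $IP$-set is a $\Delta$-set (as noted in the introduction) the hypothesis here is genuinely weaker.
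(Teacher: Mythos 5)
Your argument is correct, but it takes a different route from the paper. The paper's proof is a two-line citation: since $A_y-A_y\subseteq P$, a point $y$ with $d^*(y)>0$ would make $A_y-A_y$ a $\Delta^*$-set by a known result of Bergelson, Hindman and McCutcheon \cite{delta}, and a $\Delta^*$-set meets every $\Delta$-set, so $\mathbb N\setminus P$ could not contain one. What you do is prove exactly that cited fact (in the form needed: a set of positive upper Banach density has a difference set meeting every $\Delta(S)$) from scratch by a shifting-and-pigeonhole argument: take $N>2/\delta$ of the $s_i$, an interval $I$ of length $L\gg s_N$ on which $A_y$ has relative density at least $\delta/2$, and observe that the $N$ translates of $A_y\cap I$ by $s_1,\dots,s_N$ cannot be pairwise disjoint inside an interval of length $L+s_N$, forcing some $s_i-s_j\in A_y-A_y\subseteq P$, a contradiction. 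The quantitative bookkeeping is fine provided one makes explicit that the pigeonhole needs $N\delta/2>1+s_N/L$, which your order of choices ($N$, then $s_N$, then $L$) delivers, since $d^*(y)>\delta/2$ supplies arbitrarily long intervals with the required density. So your proof is self-contained and elementary, at the cost of reproving the lemma the paper simply imports; the paper's version is shorter and situates the theorem within the standard theory of notions of size for quotient sets, while yours has the pedagogical advantage of exhibiting why positive Banach density is incompatible with avoiding a single difference set $\Delta(S)$.
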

\begin{proof}
If $y\in\Sigma_P$, then $A_y-A_y\subset P$. But if there is $y$ such that $d^*(y)>0$
then $A_y-A_y$ is a $\Delta^*$-set \cite{delta} and $\mathbb
N\setminus P$ cannot have a $\Delta$-set.
\end{proof}

The following result is a reformulation of two results in \cite{sp}.
\begin{thm}\label{0den0Ent}
If for all $y\in \Sigma_P$, $d(y)=0$, then
\begin{enumerate}
  \item $h_{\sigma_P}=0$,
  \item $\sigma_P$ is proximal.
\end{enumerate}
\end{thm}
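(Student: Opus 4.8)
The plan is to reduce both assertions to known facts about spacing shifts by exploiting the hypothesis $d(y)=0$ for all $y\in\Sigma_P$. For part (1), I would invoke the remark quoted from \cite{sp}: if $h_{\sigma_P}>0$ then there exist $x,y\in\Sigma_P$ and $t>0$ with $F_{xy}(t)<1$, and the remark's argument then produces a point $y'\in\Sigma_P$ with $\overline{d}(y')>0$. This immediately contradicts the hypothesis that $d(y)=0$ (indeed that $\overline{d}(y)=0$) for every $y\in\Sigma_P$. Hence $h_{\sigma_P}=0$. So part (1) is essentially a contrapositive repackaging of the cited remark, with the only thing to check being that ``$d(y)=0$ for all $y$'' really does forbid $\overline{d}(y')>0$ — which it does, since $d(y')=0$ forces $\overline{d}(y')=0$.

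For part (2), I would argue directly that every pair $(x,y)\in\Sigma_P\times\Sigma_P$ is proximal. Fix a coordinate window $[-k,k]$; it suffices to find arbitrarily large $n$ with $\sigma_P^n(x)$ and $\sigma_P^n(y)$ agreeing on $[-k,k]$, and in fact it suffices to make both $\sigma_P^n(x)$ and $\sigma_P^n(y)$ equal to the all-zero word on that window, since then they agree there. Equivalently, writing $A_x=\{i:x_i=1\}$ and $A_y=\{i:y_i=1\}$, I need arbitrarily large $n$ such that $(A_x\cup A_y)\cap [n-k,n+k]=\emptyset$; then $d(\sigma_P^n(x),\sigma_P^n(y))$ is as small as we like. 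The existence of such $n$ for every $k$ is exactly the statement that the complement of $A_x\cup A_y$ is thick. Now $d(A_x)=d(A_y)=0$, so $d(A_x\cup A_y)=0$, hence $\overline d(A_x\cup A_y)=0$; a set of zero upper density certainly has complement of upper Banach density $1$, i.e. the complement is thick. This yields, for each $k$, arbitrarily long blocks of consecutive integers missing $A_x\cup A_y$, whose midpoints give the desired times $n$. Letting $k\to\infty$ through such blocks shows $\liminf_n d(\sigma_P^n(x),\sigma_P^n(y))=0$, so $(x,y)$ is proximal; as $(x,y)$ was arbitrary, $\sigma_P$ is proximal.

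The step I expect to require the most care is not the density bookkeeping but making sure the ``shift both points onto a long zero block'' construction is stated correctly for two-sided (or one-sided) sequences and that ``agree on a long central window'' genuinely forces $d(\sigma^n x,\sigma^n y)<t$ for the chosen metric on $\Sigma$ — this is just the standard product metric estimate, but it must be spelled out. One should also note that a zero density set need not itself be thin in the Banach sense, so it is the union $A_x\cup A_y$ (still zero density, hence zero upper density) whose complement one uses; this is the only place where the \emph{full} density hypothesis, rather than merely zero upper density of each individual point, is comfortably invoked, and it makes the argument clean. Since both parts are genuinely reformulations of material in \cite{sp}, I would keep the write-up short, citing the relevant lemmas there for (1) and giving the two-line thickness argument for (2).
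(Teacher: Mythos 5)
Your part (2) is fine: since $\overline{d}(A_x\cup A_y)=0$, the set $A_x\cup A_y$ cannot be syndetic (a syndetic set has positive lower density), so its complement contains arbitrarily long intervals; shifting both points onto such an interval makes them agree with $0^{k+1}$ on an initial window at arbitrarily large times, and the product-metric estimate gives $\liminf_n d(\sigma^n x,\sigma^n y)=0$. This is in substance the same argument the paper itself uses for Theorem \ref{proximal}, and for this theorem the paper simply cites \cite[Theorem 3.11]{sp}; note also that only subadditivity of upper density is needed here, not the existence of the densities.

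Part (1), however, has a genuine gap: you invoke the Remark backwards. The Remark (quoting \cite{sp}) states that \emph{if} there exist $x,y\in\Sigma_P$ and $t>0$ with $F_{xy}(t)<1$, \emph{then} $h_{\sigma_P}>0$. Your argument needs the converse, namely that $h_{\sigma_P}>0$ produces a pair with $F_{xy}(t)<1$, and that implication is nowhere established in the paper and is far from elementary: for a general system it is essentially the statement that positive topological entropy forces pairs that are distributionally separated (DC2-type pairs), a deep theorem that cannot simply be attributed to the Remark. Without it, the contradiction with the hypothesis ``$d(y)=0$ for all $y$'' never gets started. The paper's own proof of (1) is just the observation that the proof of \cite[Theorem 3.6]{sp} uses only the fact that every $y\in\Sigma_P$ has $d(y)=0$, so the conclusion carries over; if you want a self-contained argument you must argue directly, e.g.\ by counting: if every word of length $n$ in the language of $\Sigma_P$ had at most $\delta n$ ones, the number of such words would be at most $\sum_{k\le\delta n}\binom{n}{k}$, forcing $h_{\sigma_P}$ to be small, so $h_{\sigma_P}>0$ yields words with a fixed positive proportion of ones at infinitely many lengths; shifting these blocks to the origin and taking a limit point gives $y'\in\Sigma_P$ with $\overline{d}(y')>0$, contradicting the hypothesis. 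Some such argument (or the citation to \cite{sp}) is required; the reversed appeal to the Remark does not supply it.
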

\begin{proof}
 (1) and (2) are proved in \cite[Theorem
3.6]{sp} and \cite[Theorem 3.11]{sp} respectively for the case
when $\mathbb N\setminus P$ contains an $IP$-set. The proof
of these theorems are based on the fact that if $\mathbb
N\setminus P$ contains an $IP$-set then $d(y)=0$, for any $y\in
\Sigma_P$. Then this last result will lead to the both conclusions.
\end{proof}
Again the proof of this Theorem is a minor alteration of in the proof of \cite[Theorem 3.18]{sp}.
\begin{thm} \label{entropy}
There exists some $y\in \Sigma_P$ with $d^*(y)>0$ if and only if
$h_{\sigma_P}>0$.
\end{thm}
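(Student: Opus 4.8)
The plan is to prove Theorem \ref{entropy} by establishing both implications, with the forward direction ($h_{\sigma_P}>0 \Rightarrow \exists y, d^*(y)>0$) following from the Remark and the reverse direction ($d^*(y)>0 \Rightarrow h_{\sigma_P}>0$) being the substantive part that requires a direct counting argument.

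For the forward implication, I would argue the contrapositive: suppose $d^*(y)=0$ for all $y\in\Sigma_P$. I want to conclude $h_{\sigma_P}=0$. The cleanest route is to invoke the Remark together with Theorem \ref{0den0Ent}. Indeed, if $h_{\sigma_P}>0$, then by the cited result of \cite{sp} recorded in the Remark there exist $x,y\in\Sigma_P$ and $t>0$ with $F_{xy}(t)<1$, and the Remark's argument then produces a point $y'\in\Sigma_P$ with $\overline{d}(y')>0$, hence $d^*(y')\geq\overline{d}(y')>0$. This contradicts the assumption. So $h_{\sigma_P}>0$ forces some $y$ with $d^*(y)>0$; in fact one already gets a $y'$ with positive upper density.

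For the reverse implication, suppose $y\in\Sigma_P$ has $d^*(y)>0$, say $d^*(y)=\delta>0$. By definition of upper Banach density there are intervals $[M_i,N_i]$ with $N_i-M_i\to\infty$ along which the density of $A_y$ in the window exceeds $\delta/2$. The idea is to build a large collection of admissible words in $\Sigma_P$ by "independently turning off" some of the $1$'s: if $w$ is the word $y$ restricted to a window $[M_i,N_i]$ of length $L=N_i-M_i+1$, then any word obtained from $w$ by changing some subset of its $1$'s to $0$'s still lies in the language of $\Sigma_P$, because the positions of $1$'s in the new word form a subset of $A_y$ shifted, and $A_y-A_y\subset P$ is preserved under passing to subsets. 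Since $w$ has at least $\delta L/2$ ones, this gives at least $2^{\delta L/2}$ distinct admissible words of length $L$, so the complexity function satisfies $p_{\sigma_P}(L)\geq 2^{\delta L/2}$ for infinitely many $L$ (namely $L=N_i-M_i+1$), whence $h_{\sigma_P}=\limsup_L \frac{1}{L}\log p_{\sigma_P}(L)\geq \frac{\delta}{2}\log 2>0$.

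The main obstacle is making the complexity lower bound rigorous along a sequence of lengths rather than for all lengths, since topological entropy for subshifts is $\lim_L \frac{1}{L}\log p_{\sigma_P}(L)$ and this limit exists by subadditivity; but a lower bound of the form $p_{\sigma_P}(L_i)\geq 2^{cL_i}$ along $L_i\to\infty$ does suffice because $\frac{1}{L}\log p_{\sigma_P}(L)$ converges, so its value equals the limit along any subsequence, giving $h_{\sigma_P}\geq c\log 2$. One should also check the trivial but necessary point that the modified words are genuinely subwords of points in $\Sigma_P$: extend the modified window by zeros on both sides to produce a bi-infinite sequence whose support is a subset of $A_y$, which automatically satisfies the defining condition of $\Sigma_P$. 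With these two observations the argument is complete; the remaining details are routine counting.
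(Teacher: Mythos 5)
Your argument for the substantive direction, $d^*(y)>0\Rightarrow h_{\sigma_P}>0$, is correct and is essentially the paper's own proof made more explicit: the paper likewise works inside windows $[M_i,N_i]$ witnessing positive upper Banach density and implicitly counts the $2^m$ admissible words obtained from a window containing $m$ ones by switching ones to zeros (admissible because the support of the modified word is a subset of $A_y$ and the condition $A-A\subset P$ survives passing to subsets). Your observation that a bound $p_{\sigma_P}(L_i)\geq 2^{cL_i}$ along a subsequence already forces $h_{\sigma_P}\geq c\log 2$, since $\frac{1}{L}\log p_{\sigma_P}(L)$ converges by subadditivity, is exactly the point the paper leaves unstated, so this half is fine.

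The other direction, as you wrote it, has a genuine gap. You claim that if $h_{\sigma_P}>0$ then, ``by the cited result of \cite{sp} recorded in the Remark,'' there exist $x,y\in\Sigma_P$ and $t>0$ with $F_{xy}(t)<1$. But the Remark records the opposite implication: the existence of a pair with $F_{xy}(t)<1$ implies $h_{\sigma_P}>0$. The converse you need (positive entropy produces a pair with $F_{xy}(t)<1$) is stated nowhere in the paper and is not a triviality; in general it amounts to extracting a distributionally chaotic pair from positive entropy, which is a substantial theorem and certainly not something you may read off from the Remark. Fortunately the repair is immediate, and you even name the right tool without using it: if $d^*(y)=0$ for every $y\in\Sigma_P$, then $0\leq\underline{d}(y)\leq\overline{d}(y)\leq d^*(y)=0$, so $d(y)=0$ for every $y$, and then part (1) of Theorem \ref{0den0Ent} gives $h_{\sigma_P}=0$ directly, with no appeal to the Remark at all. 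This is precisely how the paper handles that direction, so replace the Remark-based step by this one-line deduction and your proof is complete.
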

\begin{proof}
First suppose there exists a point $y\in \Sigma_P$ such that
$d^*(y)>0$, so for some $l$ there exist two increasing sequences
$\{M_i\}_{i=1}^{\infty}$, $\{N_i\}_{i=1}^{\infty}$  and
$\gamma>0$ such that
 $$|\{M_i\leq j\leq N_i:  \  y_{[j,\,j+l]}\neq 0^{l+1}\}|\geq (N_i-M_i)\gamma.$$
 So
 $$|\{M_i\leq j\leq N_i:  \ y_j\neq 0\}|\geq {{(N_i-M_i)\gamma}\over{l+1}}.$$
Then by definition we have
$$h_{\sigma_P}\geq \lim_{N_i-M_i\rightarrow \infty}
{{1}\over{N_i-M_i}} \log(2^{{(N_i-M_i)\gamma}\over{l+1}})>0.$$
Conversely, if for any $y\in\Sigma_P$, $d^*(y)=0$ then $ d(y)=0$ and the proof follows from Theorem \ref{0den0Ent}.
\end{proof}
An immediate consequence of the above theorem is that if $P$ is not $\Delta^*$, then $h_{\sigma_P}=0$. In particular, this sorts out the second question.

By Theorem \ref{0den0Ent},  if $h_{\sigma_P}>0$, then there is a $y\in\Sigma_P$ such that $d(y)>0$. Combining this with the results of the above Theorem we have:
\begin{cor}
There is a point $y\in\Sigma_P$ with $d(y)>0$ if and only if for some $y'$, $d^*(y')>0$.
\end{cor}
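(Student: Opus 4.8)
The plan is to deduce the statement directly from Theorems \ref{0den0Ent} and \ref{entropy}, together with the elementary comparison $d^*(y)\ge\overline{d}(y)\ge\underline{d}(y)$, so that whenever the density $d(y)$ exists one has $d^*(y)\ge d(y)$. The forward implication is then immediate: if $y\in\Sigma_P$ satisfies $d(y)>0$, then $d^*(y)\ge d(y)>0$, so the point $y'=y$ works. Nothing further is needed for this direction.

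For the converse, suppose $d^*(y')>0$ for some $y'\in\Sigma_P$. By Theorem \ref{entropy} this yields $h_{\sigma_P}>0$, and then we invoke the contrapositive of Theorem \ref{0den0Ent}(1): since $h_{\sigma_P}>0$, it cannot be that $d(y)=0$ for every $y\in\Sigma_P$. The only genuine point requiring care is that the negation of ``$d(y)=0$ for all $y$'' only asserts the existence of a $y$ whose density either fails to exist (with $\overline{d}(y)>0$) or exists and is nonzero; we must produce a point at which $d(y)$ genuinely exists and is positive. This is exactly the content recorded in the sentence preceding the statement, and in keeping with the paper's style one may cite it directly. If one prefers an argument, positive entropy gives, by the variational principle, an invariant Borel probability measure $\mu$ on $\Sigma_P$ with $h_\mu(\sigma_P)>0$; the cylinder of points with a $1$ in the zeroth coordinate must then have positive $\mu$-measure, since otherwise $\mu$ would be carried by the fixed point $0^\infty$, contradicting $h_\mu(\sigma_P)>0$. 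Passing to an ergodic component on which this cylinder still has positive measure and applying the Birkhoff ergodic theorem produces, for $\mu$-a.e.\ point $y\in\Sigma_P$, a well-defined density $d(y)>0$.

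The main obstacle is precisely this upgrade from ``some point does not have zero density'' to ``some point has a well-defined positive density''; every other step is a one-line deduction from the results already established. Combining the two directions, the existence of $y\in\Sigma_P$ with $d(y)>0$ and the existence of $y'\in\Sigma_P$ with $d^*(y')>0$ are equivalent, both being equivalent to $h_{\sigma_P}>0$.
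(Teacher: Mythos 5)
Your proposal is correct, and its skeleton is the same as the paper's: the forward direction is the trivial inequality $d^*(y)\ge d(y)$, and the converse runs through Theorem \ref{entropy} ($d^*(y')>0\Rightarrow h_{\sigma_P}>0$) and the contrapositive of Theorem \ref{0den0Ent}(1). Where you differ is that you notice, and repair, a point the paper passes over silently: the paper simply states ``if $h_{\sigma_P}>0$, then there is $y\in\Sigma_P$ with $d(y)>0$,'' but the contrapositive of Theorem \ref{0den0Ent}(1) literally yields only a point $y$ for which $d(y)$ fails to be $0$, which includes the possibility that the density does not exist at all (only $\overline{d}(y)>0$). Your patch --- take, via the variational principle, an ergodic measure $\mu$ with $h_\mu(\sigma_P)>0$, note that the cylinder $[1]$ must have positive $\mu$-measure since otherwise $\mu=\delta_{0^\infty}$ has zero entropy, and apply the Birkhoff ergodic theorem to the indicator of $[1]$ to get a point whose density of $1$'s exists and equals $\mu([1])>0$ --- is a correct and standard way to upgrade to a genuine positive density, at the cost of importing measure-theoretic machinery (variational principle, ergodic decomposition, Birkhoff) that the paper, which argues purely combinatorially elsewhere, does not invoke. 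So your argument proves slightly more carefully exactly what the paper asserts; the paper's version is shorter but, read literally, only establishes the equivalence of ``some $y$ has $\overline{d}(y)>0$'' with ``some $y'$ has $d^*(y')>0$'' unless one supplies an argument like yours.
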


 The following gives an answer to the third question. Moreover, this result and the fact that when $P$ misses an $IP$-set then it is not $\Delta^*$ and so has zero entropy is an answer for the first question as well.
\begin{thm}\label{proximal}
If  $h_{\sigma_P}=0$ then $\Sigma_P$ is proximal.

\end{thm}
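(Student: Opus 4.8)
The plan is to argue by contradiction, leveraging the contrapositive of Theorem~\ref{entropy}. Suppose $h_{\sigma_P}=0$ but $\Sigma_P$ is not proximal; then there exist $x,y\in\Sigma_P$ and $t>0$ with $\liminf_{n\to\infty}d(\sigma^n_P(x),\sigma^n_P(y))\geq t$, i.e.\ the orbits of $x$ and $y$ stay at least $t$-apart from some point on. Since $h_{\sigma_P}=0$, Theorem~\ref{entropy} gives $d^*(y')=0$ for \emph{every} $y'\in\Sigma_P$; in particular $d^*(x)=0$ and $d^*(y)=0$. The strategy is to show that the separation assumption forces one of $x$, $y$ to carry a positive density of $1$'s inside the windows where the two points differ, contradicting $d^*=0$.

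The key steps, in order, are as follows. First, translate ``$d(\sigma^n_P(x),\sigma^n_P(y))\geq t$ for all large $n$'' into a combinatorial statement: with $t=2^{-l}$, for every sufficiently large $n$ there is an index $j$ with $0\le j\le l$ such that $x_{n+j}\neq y_{n+j}$; that is, the block $x_{[n,n+l]}$ differs from $y_{[n,n+l]}$. Second, observe that whenever $x_{[n,n+l]}\neq y_{[n,n+l]}$, at least one of these length-$(l+1)$ blocks contains a $1$ (since if both were $0^{l+1}$ they would agree). Third, run a pigeonhole/density argument over an interval $\{M,\dots,N\}$: the set of starting positions $n$ is all of $\{M,\dots,N-l\}$, and each such $n$ contributes a $1$ to $x$ or to $y$ within distance $l$; hence on at least half of these positions (say) the contribution comes from a fixed one of the two points, giving $|\{M\le j\le N:\ x_j\neq 0\}|\ge \tfrac{N-M}{2(l+1)}$ or the analogous bound for $y$, along a sequence $N-M\to\infty$. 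This yields $d^*(x)>0$ or $d^*(y)>0$, the desired contradiction. Finally, one could alternatively cite Theorem~\ref{0den0Ent} directly: $h_{\sigma_P}=0$ together with Theorem~\ref{entropy} gives $d(y')=0$ for all $y'$, and Theorem~\ref{0den0Ent}(2) then yields proximality immediately, so the real content is precisely the equivalence already established.

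The main obstacle is the bookkeeping in the pigeonhole step: one must be careful that a single $1$ in $x$ can be ``responsible'' for several different starting positions $n$ (all those $n$ with the $1$ lying in $[n,n+l]$), so the count of distinct positions $n$ does not directly equal the count of $1$'s --- but this only costs a factor of $l+1$, which is harmless for concluding positivity of the upper Banach density. A second minor point is ensuring the two subsequences $\{M_i\},\{N_i\}$ can be chosen simultaneously for one of $x,y$: since the ``bad'' positions fill an entire interval, one splits each interval according to which point supplies the $1$, passes to the point that gets at least half of the intervals (or at least half of each interval) infinitely often, and extracts the required subsequence. Given that Theorem~\ref{entropy} is already in hand, the cleanest write-up simply notes that $h_{\sigma_P}=0 \Rightarrow d^*(y')=0\ \forall y' \Rightarrow d(y')=0\ \forall y' \Rightarrow \sigma_P$ proximal by Theorem~\ref{0den0Ent}, so I expect the actual proof to be two or three lines rather than the full argument sketched above.
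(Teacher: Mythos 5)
Your proposal is correct, and in fact you end by naming exactly the route the paper takes: from $h_{\sigma_P}=0$, Theorem~\ref{entropy} gives $d^*(y)=0$ for every $y\in\Sigma_P$, hence $d(\{i: y_i=0\})=1$ for every point, so for any pair $x,y$ the common zero set has density $1$ and the pair is proximal; the paper's proof is precisely this two-line argument (it does not even invoke Theorem~\ref{0den0Ent}(2) explicitly, but the content is the same). Your main worked-out argument is the contrapositive version: assuming a non-proximal pair stays $2^{-l}$-separated eventually, every large window forces a $1$ into one of the two points, and a pigeonhole over intervals (with the factor $l+1$ overcount and the passage to a subsequence attached to a single point) produces some point with $d^*>0$, contradicting Theorem~\ref{entropy}. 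That route is sound and essentially mirrors the counting in the Remark of Section~\ref{sect}; its advantage is that it makes explicit the block-level bookkeeping that the paper's direct proof glosses over at the last step (density $1$ of the common zero \emph{set} does not by itself give proximality --- one needs, for each $l$, times $n$ where both points show the block $0^{l+1}$, which is exactly what $d^*=0$ for both points supplies, since the set of positions within distance $l$ of a $1$ still has Banach density $0$). The direct route is shorter; yours is more self-contained at that one point. Either write-up is acceptable.
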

\begin{proof}
Suppose $h_{\sigma_P}=0$. Then by Theorem \ref{entropy}, for any $y\in \Sigma_P$ we have
$d^*(y)=0$ which implies that $d(\{i: y_i=0 \})=1$. Hence for any two
points $x,\,y\in \Sigma_P$, $d(\{i: x_i=0 \}\cap \{i: y_i=0\})=1$
and this in turn implies that $\Sigma_P$ is proximal.

\end{proof}
\subsection{A necessary condition for transitivity}
Still there is not a characterization for $P$ to have $\Sigma_P$ transitive. This also has been put as a question in  \cite[Question 1]{sp}. A necessity is the following.
\begin{thm}\label{transitive}
Suppose $\Sigma_P$ is transitive. Then $P$ is an $IP-IP$ set.
\end{thm}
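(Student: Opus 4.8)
The plan is to unwind what transitivity of a subshift means at the level of words and then read off an $IP$ structure inside $P$. Recall that for a subshift, transitivity is equivalent to the existence of a point whose orbit is dense, and for spacing shifts this translates into a combinatorial statement: for every pair of admissible words $u,v$ that occur in $\Sigma_P$, there is a gap $g$ so that $u\,0^{g}\,v$ is again admissible. In particular, since every finite pattern of $1$'s whose mutual differences lie in $P$ is admissible, transitivity lets us concatenate such patterns with controlled spacings. First I would fix the point $x$ with $\overline{\mathcal{O}}(x)=\Sigma_P$ and set $A_x=\{i:x_i=1\}$; the hypothesis $A_x-A_x\subseteq P$ together with density of the orbit will be the two facts in play.

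The core of the argument is an inductive construction of a sequence $(a_n)$ with $FS(\{a_n\}) - FS(\{a_n\})$ — or more precisely the needed $IP-IP$ pattern — sitting inside $P$. I would proceed by choosing successive occurrences in $x$ of longer and longer blocks that themselves already contain a rich difference set, using transitivity to guarantee that a block can be followed (after some gap) by a shifted copy of itself; each time this happens the set of realized differences closes up under one more ``sum'' operation. Concretely: suppose we have found $a_1,\dots,a_{n}$ such that all the relevant partial sums and their differences are forced to lie in $P$ because they appear as $A_x-A_x$; transitivity applied to the word recording the positions $FS(\{a_1,\dots,a_n\})$ produces a later occurrence of that same word, and the offset between the two occurrences can be taken as $a_{n+1}$ (after possibly passing to a large enough return time). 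Then every element of $FS(\{a_1,\dots,a_{n+1}\})$, and the differences among them, is realized as a difference of two indices where $x$ has a $1$, hence lies in $P$. Taking the nested limit of these finite constructions yields the two $IP$-sets whose sumset/difference-set structure witnesses that $P$ is $IP-IP$.

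I expect the main obstacle to be bookkeeping the two separate $IP$-sets correctly: an $IP-IP$ set is not simply ``$P$ contains an $IP$-set,'' so one must be careful that the construction really produces $P \supseteq FS(B) + FS(C)$ (or the appropriate $IP-IP$ formulation) rather than a single $FS$. The delicate point is that when we use transitivity to return near a previously seen block, the \emph{new} occurrence contributes differences both internally (giving one $IP$-set) and against the old occurrence (giving the interaction with the other), and one has to set up the induction so these two roles stay cleanly separated at every stage. A secondary technical nuisance is ensuring the return times can be chosen strictly increasing and large enough that the blocks do not overlap, so that all the claimed differences are genuinely of the form $i-j$ with $x_i=x_j=1$; this is routine once the bookkeeping scheme is fixed, since transitivity gives infinitely many return times and we simply discard the small ones.
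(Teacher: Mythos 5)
Your construction is correct in substance, but it takes a more hands-on route than the paper. The paper's proof is essentially two lines: a transitive point $y$ is recurrent, so by the cited theorem of Furstenberg (the return times of a recurrent point to a neighbourhood form an $IP$-set, applied to the cylinder $[1]$) the set $A_y=\{i: y_i=1\}$ contains $FS(A)$ for some sequence $A$, and since $A_y-A_y\subseteq P$ this already gives $FS(A)-FS(A)\subseteq P$, i.e.\ $P$ is an $IP-IP$ set. Your inductive word-doubling argument is precisely the standard proof of that recurrence theorem, inlined in the symbolic setting: keeping the base occurrence fixed and adjoining at stage $n$ a later occurrence of the current block at offset $a_n$ forces $1$'s at the positions $p+\left(\{0\}\cup FS(\{a_1,\dots,a_n\})\right)$ of the transitive point, so all mutual differences of these positions lie in $A_y-A_y\subseteq P$, and letting $n\to\infty$ yields $FS(\{a_n\})-FS(\{a_n\})\subseteq P$. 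Two remarks. First, your worry about bookkeeping ``two separate $IP$-sets'' is unnecessary: the way the paper uses the term (e.g.\ ``$P=FS(A)-FS(A)$ is a transitive system''), an $IP-IP$ set is one containing the difference set of a single $IP$-set, and your nested construction delivers exactly that (indeed it even gives $FS(\{a_n\})\subseteq P$, from differences against the base $1$). Second, the step ``transitivity produces a later occurrence of the same word'' deserves one explicit line: the paper defines transitivity by a dense orbit, and for one-sided subshifts a dense orbit does not in general give $N([w],[w])\neq\emptyset$. For spacing shifts it does, because $0^n w$ is admissible whenever $w$ is, so the transitive point must enter the nonempty open cylinder $[0^n w]$ for every $n$ and hence contains occurrences of $w$ arbitrarily far out; this also shows the gaps can be taken as long as you like, handling your non-overlap concern. (The paper's own proof needs a remark of the same flavour to see that the transitive point is recurrent and may be assumed to lie in $[1]$.)
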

\begin{proof}
For any TDS such as $(X,\,T)$, the return times of a recurrence
point $x$ to any non-empty open set $U$, that is, $N(x,U)=\{n\in
\mathbb N: T^n(x)\in U\}$ is an $IP$-set \cite[Theorem
2.17]{Fur}. Now let $y$ be a transitive point. Then $y$ is a
recurrence point and $N(y,\,[1])$ is an $IP$-set. But
$N(y,\,[1])=\{y_i:\ y_i=1\}=A_y$ and so $A_y-A_y\subset P$ and as
a result $P$ is an $IP-IP$ set.
\end{proof}

An application of the above theorem is that any thick subset of natural numbers is an $IP-IP$ set. This is because $\Sigma_P$ is weak mixing if and only if $P$ is thick and if a TDS is weak mixing, then it is  transitive, in fact, totally transitive: $(\Sigma_P,\,\sigma^n)$ is transitive for all $n=0,\,1,\,\ldots$.

It is not hard to see that for any infinite subset of $\mathbb{N}$ such as $A$, $P=FS(A)-FS(A)$ is a transitive system. On the other hand, let $k\geq 3$, $p_2>p_1$ and $p_2-p_1\not=kn$ for any $n\in\mathbb{N}$. Now if $P=k\mathbb{N}\cup\{p_1,\, p_2\}$, then $\Sigma_P$ is not transitive, however it is clearly $IP-IP$ set. Because it contains an $IP-IP$ set such as $k\mathbb{N}$.

By now we understand that this is the structure in $P$ and not density which gives interesting dynamics to our spacing shifts systems. For instance, if $P$ is not a $\Delta$-set-set, then
 for all $y\in \Sigma_P$, $\sum_{i=1}^{\infty}y_i<\infty$. This gives a very simple dynamics to $\Sigma_P$. In fact, it is an equicontinuous system where any point will be attracted to $0^\infty$ eventually. We may choose $P$ to have high density. As an example, for any $\epsilon>0$ let $\frac{1}{k}<\epsilon$ and set $P=\mathbb N\setminus k\mathbb N$. Then $
d(P)\geq 1-\epsilon$ and since  $k\mathbb N$ is a $\Delta^*$-set
$P$ does not contain any $\Delta$-set.
\section{Combinatorial Characterization for Zero Entropy}
In  section \ref{sect}, we showed that $P$ must be at least $\Delta^*$ set, that is a highly structured and large set to have positive entropy. Here we show that even if $P$ is a $\Delta^*$ set, it is not  guaranteed that $h_{\sigma_P}>0$.

One calls $E\subset\mathbb{N}$ a \emph{density intersective} set
if for any $A\subset\mathbb{N}$ with positive upper Banach
density, $E\cap(A-A)\neq\emptyset$. For instance, any $IP$-set is
a density intersective set. In fact, if $R\subset\mathbb{N}$ is
an $IP$-set and $p(\cdot)$ is a polynomial such that
$p(\mathbb{N})\subset\mathbb{N}$, then $E=\{p(n):\ n\in R\}$ is
a  density intersective set \cite{fer}.
\begin{thm}

$h_{\sigma_P}=0$ if and only if $P=\mathbb{N}\setminus E$ where
$E$ is a density intersective set.
\end{thm}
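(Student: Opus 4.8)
The plan is to prove both directions by reducing to the density criterion already established in Theorem \ref{entropy}, namely that $h_{\sigma_P}=0$ if and only if every $y\in\Sigma_P$ has $d^*(y)=0$. The key translation is the observation, recorded in the paper, that for $y\in\Sigma_P$ we have $A_y-A_y\subset P$, together with the fact that the set of difference sets $A-A$ arising from sets $A$ of positive upper Banach density is exactly the family that a density intersective set must meet. So the statement ``$E$ is density intersective'' unwinds to ``$E$ meets $A-A$ for every $A$ with $d^*(A)>0$,'' and we want to match this against ``no $y\in\Sigma_P$ has $d^*(y)>0$.''

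For the forward direction, suppose $h_{\sigma_P}=0$ and set $E=\mathbb{N}\setminus P$; I would show $E$ is density intersective. Take any $A\subset\mathbb{N}$ with $d^*(A)>0$. The point $y$ with $A_y=A$ — i.e. $y_i=1$ iff $i\in A$ — need not lie in $\Sigma_P$, so the step here is to argue by contraposition: if $E\cap(A-A)=\emptyset$ then $A-A\subset P$, which is precisely the membership condition for the indicator sequence $y$ of $A$ to belong to $\Sigma_P$ (since $y_i=y_j=1$ forces $i-j\in A-A\subset P$). That $y$ has $d^*(y)=d^*(A)>0$, contradicting Theorem \ref{entropy}. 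Hence every such $A$ has $E\cap(A-A)\neq\emptyset$, so $E$ is density intersective.

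For the converse, suppose $P=\mathbb{N}\setminus E$ with $E$ density intersective, and suppose toward a contradiction that $h_{\sigma_P}>0$. By Theorem \ref{entropy} there is $y\in\Sigma_P$ with $d^*(y)>0$; put $A=A_y$, so $d^*(A)>0$ and $A-A\subset P$. But $E$ density intersective gives $E\cap(A-A)\neq\emptyset$, i.e. some element of $A-A$ lies in $E=\mathbb{N}\setminus P$, contradicting $A-A\subset P$. Hence $h_{\sigma_P}=0$.

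The only real subtlety — and the step I would be most careful about — is the direction where one produces a point of $\Sigma_P$: one must check that the indicator sequence of a set $A$ with $A-A\subset P$ genuinely satisfies Definition \ref{spacing} (the condition is symmetric in $i,j$ and $A-A$ should be understood to include negative differences, or equivalently $|i-j|\in P\cup\{0\}$), and that the notion of upper Banach density of the sequence $y$ coincides with $d^*(A_y)$, which is exactly the convention fixed in the paragraph after Definition \ref{spacing}. Everything else is a direct application of Theorem \ref{entropy} and the definition of density intersective, so no lengthy computation is needed; the proof is essentially a dictionary translation between the combinatorial language of $E$ and the dynamical language of $\Sigma_P$.
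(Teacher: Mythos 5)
Your proposal is correct and follows essentially the same route as the paper: both directions reduce to Theorem \ref{entropy} via the correspondence $y\leftrightarrow A_y$ with $A_y-A_y\subset P$, arguing by contraposition in the forward direction and directly in the converse. Your extra care about why the indicator sequence of $A$ lies in $\Sigma_P$ and why $d^*(y)=d^*(A_y)$ only makes explicit what the paper leaves implicit.
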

\begin{proof}
Suppose $h_{\sigma_P}=0$. If $E=\mathbb{N}\setminus P$ is not
density  intersective, then there must be a set A with positive
upper Banach density such that $A-A\subseteq P$. Choose
$y\in\Pi_{i=0}^\infty \{0,\,1\}$ such that $y_i=1$ if and only if
$i\in A$. Then $y\in\Sigma_P$ and $A=A_y$. But this is absurd by
Theorem \ref{entropy}.

For the other side, if $E$ is density intersective, then $P$ does
not contain any $A-A$ where $A$ is as above. Therefore, for all
$y\in\Sigma_P$, $d^*(y)=0$ which implies $h_{\sigma_P}=0$.
\end{proof}

It is an easy exercise to show that $\{n^2:\ n\in\mathbb{N}\}$
does not contain any $\Delta$-set. So $P=\mathbb{N}\setminus E$
is a $\Delta^*$ set and by the above theorem, $h_{\sigma_P}=0$.

\subsection{Positive entropy  with no non-zero periodic points}
Any spacing shift has  $0^\infty$ as its  periodic point. But a spacing shift has a non-zero periodic point of
period $k$ if and only if $P$ contains $k\mathbb N$ \cite[Lemma 2.6]{sp}.
This implies there is a point $y$ with $d(y)\geq\frac{1}{k}$ and so by Theorem \ref{entropy} we have positive entropy.
\begin{thm}
There is $P$ such that $\Sigma_P$ has positive entropy with no non-zero periodic points.
\end{thm}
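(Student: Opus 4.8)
The plan is to realize $P$ as the difference set of a short Bohr set. Fix an irrational $\alpha$ and an open arc $U\subset\mathbb{R}/\mathbb{Z}$ with $|U|<\tfrac12$; concretely one may take $\alpha=\sqrt2$ and $U=(0,\tfrac13)$. Put $A=\{n\in\mathbb{N}:\ n\alpha\bmod1\in U\}$ and $P=(A-A)\cap\mathbb{N}$. Two properties of this choice carry the whole argument. First, if $a,b\in A$ then $(a-b)\alpha\bmod1\in U-U$, so $A-A\subseteq D:=\{n\in\mathbb{N}:\ n\alpha\bmod1\in\overline{U-U}\}$; and since $|U|<\tfrac12$ the closed arc $\overline{U-U}$ is a proper subset of $\mathbb{R}/\mathbb{Z}$, hence $D\subsetneq\mathbb{N}$. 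Second, by Weyl's equidistribution theorem the set $A$ has density $|U|>0$.

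From the second property positive entropy is immediate: let $y$ be the sequence with $A_y=A$. Then $A_y-A_y=A-A\subseteq P$, so $y\in\Sigma_P$, and $d^*(y)\ge d(y)=|U|>0$, whence $h_{\sigma_P}>0$ by Theorem~\ref{entropy}.

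For the absence of non-zero periodic points I would invoke \cite[Lemma~2.6]{sp}, which says that $\Sigma_P$ has a non-zero periodic point of period $k$ if and only if $k\mathbb{N}\subseteq P$; thus it is enough to show $k\mathbb{N}\not\subseteq P$ for every $k\ge1$. Fix $k$. Since $k\alpha$ is again irrational, the sequence $(mk\alpha)_{m\ge1}$ is equidistributed in $\mathbb{R}/\mathbb{Z}$ and so meets the non-empty open set $(\mathbb{R}/\mathbb{Z})\setminus\overline{U-U}$; for such an $m$ we get $mk\notin D$, hence $mk\notin P$, so $k\mathbb{N}\not\subseteq P$. As $k$ was arbitrary, $\Sigma_P$ has no non-zero periodic points, and together with the previous paragraph this proves the theorem.

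The substance is entirely in the two displayed properties of the construction; what one has to be a little careful about is the last step, namely that the equidistribution argument must be applied uniformly in $k$, producing for \emph{every} $k$ a multiple of $k$ outside the fixed arc $\overline{U-U}$, not just for large $k$ — but irrationality of $k\alpha$ for each $k\ge1$ makes this automatic, so there is no real obstacle once the construction is chosen. The remaining points — that $P$ is a genuine non-empty subset of $\mathbb{N}$, that $y$ is a legitimate element of $\Sigma_P$, and the citations of Theorem~\ref{entropy} and \cite[Lemma~2.6]{sp} — are routine bookkeeping.
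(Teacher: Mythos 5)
Your proof is correct, but it takes a genuinely different route from the paper. The paper invokes K\v{r}\'{\i}\v{z}'s theorem to get a set $A$ of positive upper Banach density whose difference set contains \emph{no Bohr set at all}; since every $k\mathbb{N}$ is a Bohr set, $P=A-A$ then misses every $k\mathbb{N}$, and positive entropy follows from Theorem \ref{entropy} exactly as in your second paragraph. You instead take $A$ itself to be a Bohr-type set $\{n:\ n\alpha \bmod 1\in U\}$ with $\alpha$ irrational and $|U|<\tfrac12$, observe that $P=(A-A)\cap\mathbb{N}$ is trapped inside $\{n:\ n\alpha \bmod 1\in \overline{U-U}\}$, and use equidistribution of $(mk\alpha)_m$ (valid for every $k$ since $k\alpha$ is irrational) to find, for each $k$, a multiple of $k$ outside that arc; combined with \cite[Lemma 2.6]{sp} this rules out all non-zero periodic points, and Weyl equidistribution gives $d(A)=|U|>0$, hence positive entropy. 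The comparison: your construction is completely elementary, needing only Weyl's theorem rather than K\v{r}\'{\i}\v{z}'s deep combinatorial result; the price is that your $P$ actually \emph{does} contain Bohr sets (indeed $A-A$ contains a Bohr neighborhood of $0$), so it only establishes the stated theorem, whereas the paper's example has the stronger property of containing no Bohr set whatsoever, which is more in line with the paper's general theme that structure in $P$, not mere size, governs the dynamics. For the theorem as stated, your argument is complete and sound.
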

\begin{proof}
A theorem of K$\check{\rm r}\acute{\rm i}\breve{\rm z}$
\cite{ruzsa} states
 that
there is a set $A$ with positive upper Banach density whose difference
set contains no Bohr set. So let $y=\{y_i\}_{i\in\mathbb{N}}$ be defined by $y_i=1$ if $i\in A$ and zero otherwise. Set  $P=A-A$. Then $y\in\Sigma_P$, $A_y=A$ and $\overline{d}(y)=\overline{d}(A)>0$. Therefore, $h_{\sigma_P}>0$ and since $P$ does not contain any Bohr set  it does not contain any $k\mathbb{N}$ and the proof is complete.
\end{proof}

\end{document}